\newcommand{\AMSclass}[1]{{\textbf{AMS subject classification:} #1}}
\newcommand{\email}[1]{{\textit{Email:} \texttt{#1}}}
\newcommand{\keywords}[1]{{\textbf{Keywords:} #1}}
\newcommand{\assign}{:=}
\newcommand{\comma}{{,}}
\newcommand{\nocomma}{}
\newcommand{\tmem}[1]{{\em #1\/}}
\newcommand{\tmop}[1]{\ensuremath{\operatorname{#1}}}
\newtheorem{lemma}{Lemma}
\newtheorem{proposition}{Proposition}
\newtheorem{theorem}{Theorem}
\newtheorem{corollary}{Corollary}
\theoremstyle{remark}
\newtheorem*{acknowledgement}{Acknowledgement}
\begin{document}
\title{The jump of the Milnor number in the $X_{9}$ singularity class
\thanks{\AMSclass{32S05, 14B05, 32S30, 14B07}, \keywords{Milnor number, singularity, deformation of singularity}}}
\author{Szymon Brzostowski and Tadeusz Krasi\'nski
\thanks{\email{brzosts@math.uni.lodz.pl}, \email{krasinsk@uni.lodz.pl}}\\
\\
\small Faculty of Mathematics and Computer Science,\\
\small University of \L\'od\'z,\\
\small ul. Banacha 22, 90-238 \L\'od\'z, Poland}
\maketitle
\begin{abstract}
The jump of the Milnor number of an isolated singularity $f_{0} $ is the
minimal non-zero difference between the Milnor numbers of $f_{0}$ and one of
its deformations $\left(  f_{s} \right)  $. We prove that for the
singularities in the $X_{9}$ singularity class their jumps are equal
to $2$.
\end{abstract}

\section{Introduction}

\label{Par 1}

Let $f_{0} : (\mathbbm{C}^{n}, 0) \rightarrow(\mathbbm{C}, 0)$ be an
{\emph{(isolated) singularity\/}}, i.e. $f_{0}$ is a germ at $0$ of a
holomorphic function having an isolated critical point at $0 \in
\mathbbm{C}^{n}$, and $0 \in\mathbbm{C}$ as the corresponding critical value.
More specifically, there exists a representative $\hat{f}_{0} : U
\rightarrow\mathbbm{C}$ of $f_{0} $, holomorphic in an open neighborhood $U$
of the point $0 \in\mathbbm{C}^{n}$, such that $\hat{f}_{0} \left(  0 \right)  = 0$,
 $\nabla\hat{f}_{0} \left(  0 \right)  = 0$ and $\nabla\hat{f}_{0} \left(  z \right)  \neq0$ for $z \in U\backslash
\left\{  0 \right\}  $, where for a holomorphic function $f$ we put $\nabla f = \nabla_{z}
f := \left(  \partial f / \partial z_{1}, \ldots, \partial f / \partial z_{n}
\right)  $.

In the sequel we will identify germs of holomorphic functions with their
representatives or the corresponding convergent power series. The ring of
germs of holomorphic functions of $n$ variables will be denoted by
$\mathcal{O}^{n}$.

A {\emph{deformation of the singularity\/}} $f_{0}$ is the germ of a
holomorphic function $f = f \left(  s, z \right)  : \left(  \mathbbm{C}
\times\mathbbm{C}^{n}, 0 \right)  \rightarrow\left(  \mathbbm{C}, 0 \right)  $
such that:

\begin{enumerate}
\item $f \left(  0, z \right)  = f_{0} \left(  z \right)  $,

\item $f \left(  s, 0 \right)  = 0$,

\item for each $\left|  s \right|  \ll1$ it is $\nabla_{z} f \left(  s, z
\right)  \neq0$ for $z \neq0$ in a (small) neighborhood of $0 \in
\mathbbm{C}^{n}$.
\end{enumerate}

The deformation $f \left(  s, z \right)  $ of the singularity $f_{0}$ will
also be treated as a family $\left(  f_{s} \right)  $ of germs, taking $f_{s}
\left(  z \right)  := f \left(  s, z \right)  $. In this context, the symbol
$\nabla f_{s}$ will always denote $\nabla_{z} f_{s}$.

{\remark{\label{nota1}Notice that in the deformation $\left( f_s \right)$ of
$f_0$ there can occur {\tmem{smooth}} germs, that is germs satisfying $\nabla
f_s \left( 0 \right) \neq 0$.}}{\medskip}

By the above assumptions it follows that, for every sufficiently small $s$,
one can define a (finite) number $\mu_{s}$ as the Milnor number of $f_{s}$,
namely
\[ \mu_s \assign \mu \left( f_s \right) \mathrel{=} \dim_{\mathbbm{C}} 
   \mathbin{\mathbin{\mathcal{O}}^n / \mathbin{\left( \nabla f_s \right)}}
   \mathord{\mathrel{}} \mathrel{=} i_0 \left( \frac{\partial
   f_s}{\partial z_1}, \ldots, \frac{\partial f_s}{\partial z_n} \right) \text{,}
\]
where the symbol $i_0 \left( \frac{\partial f_s}{\partial z_1},
\ldots, \frac{\partial f_s}{\partial z_n} \right)$ denotes the multiplicity of
the ideal $\left( \frac{\partial f_s}{\partial z_1}, \ldots, \frac{\partial
f_s}{\partial z_n} \right) \mathcal{O}^n$.
Since the Milnor number is upper semi-continuous in the Zariski topology in families of singularities
{\cite[Ch. I, Thm. 2.6 and Ch. II, Prop. 2.57]{GLS07}}, there exists an open neighborhood $S$ of the
point $0 \in\mathbbm{C}$ such that

\begin{enumerate}
\item $\mu_{s}=\operatorname{const.}$ for $s\in S\setminus\left\{
0\right\}  $,

\item $\mu_{0} \geqslant\mu_{s}$ for $s \in S$.
\end{enumerate}

The (constant) difference $\mu_{0} - \mu_{s}$ for $s \in S \setminus\left\{  0
\right\}  $ will be called {\emph{the jump of the deformation $\left(  f_{s}
\right)  $\/}} and denoted by $\lambda\left(  \left(  f_{s} \right)  \right)
$. The smallest nonzero value among all the jumps of deformations of the
singularity $f_{0}$ will be called {\emph{the jump (of the Milnor number) of
the singularity $f_{0}$\/}} and denoted by $\lambda\left(  f_{0} \right)  $.

The first general result concerning the problem of computation of the jump was
S. Gusein-Zade's {\cite{Gus93}}, who proved that there exist singularities
$f_{0}$ for which $\lambda\left(  f_{0} \right)  > 1$ and that for irreducible
plane curve singularities $f_{0}$ it holds $\lambda\left(  f_{0} \right)  =
1$. He showed that generic elements in some classes of singularities
(satisfying conditions concerning the Milnor numbers and modality) fulfill
$\lambda\left(  f_{0} \right)  > 1$, but he did not give any specific example
of such a singularity.

The two-dimensional version of the problem of computation of the jump, and
more precisely -- of {\emph{the non-degenerate jump}\/} (i.e. all the families
$\left(  f_{s}\right)  $ being considered are to be made of Kouchnirenko
non-degenerate singularities), has been studied in
{\cite{Bod07}}, {\cite{Wal10}}. 

The following are examples of classes of singularities that fulfill the
assumptions of the Gusein-Zade theorem.

\begin{enumerate}
\item The class $X_{9}$, in the terminology of {\cite{AGV85}. It consists of
singularities stably equivalent to the singularities} of the form $f_{0}%
^{a}(x,y):=x^{4}+y^{4}+ax^{2}y^{2},\text{{}}a\in\mathbbm{C},a^{2}\neq4$. The
singularities are of modality $1$ and $\mu(f_{0}^{a})=9$.

\item The class $W_{1,0}$, in the terminology of {\cite{AGV85}. It consists of
singularities stably equivalent to the singularities} of the form
$f_{0}^{(a,b)}(x,y):=x^{4}+y^{6}+\left(  a+by\right)  x^{2}y^{3},\text{{}%
}a,b\in\mathbbm{C},a^{2}\neq4$. The singularities are of modality $2$ and
$\mu(f_{0}^{(a,b)})=15$.
\end{enumerate}

By the Gusein-Zade result, generic elements $f$ of the classes $X_{9}$ and
$W_{1,0}$ satisfy $\lambda\left(  f\right)  >1$. However,
determining the jump of any particular element of these classes is still an
open and difficult problem. Gusein-Zade did not give any specific
example of a singularity $f$ with $\lambda\left(  f\right)  >1$. The purpose
of this work is to prove (Thm. \ref{Th1}) that for the singularities in the $X_{9}$ class we have
\[
\lambda(  f_{0}^a)  =2
\]
(and that therefore all the singularities of the class $X_{9}$ are
,,generic\textquotedblright\ in the family $X_{9}$).
In the class $W_{1,0}$ we obtain only a partial result (Prop. \ref{Th0}). Namely, for the
singularities in $W_{1,0}$ that are stably equivalent to the ones in the subclass%
\[
f_{0}^{\left(0,b\right)}\left(  x,y\right)  =x^{4}+y^{6}+bx^{2}y^{4}%
{,}\hspace{1em}b\in\mathbbm{C}{,}
\]
we have
\[
\lambda(  f_{0}^{\left(  0,b\right)  })  =1
\]
(therefore these singularities are not ,,generic\textquotedblright%
\ in the family $W_{1,0}$).

This implies that the jump $\lambda(f_0)$ is not a topological
invariant of singularities (Cor. \ref{cor2}). {\medskip}

In the light of the above results the following problems arise:
\begin{enumerate}
\item Show that for the remaining singularities in the $W_{1,0}$ class, i.e.
for the singularities stably equivalent to $f_{0}^{\left(  a,b\right)
}:=x^{4}+y^{6}+\left(  a+by\right)  x^{2}y^{3}$, where $a,b\in
\mathbbm{C},0\neq a^{2}\neq4$, we have $\lambda(  f_{0}^{\left(
a,b\right)  })  =2$,
\end{enumerate}

\noindent and more general ones (posed by Bodin in {\cite{Bod07}}):

\begin{enumerate}
\item[(2)] Find an algorithm that computes $\lambda\left(  f_{0}\right)  $.

\item[(3)] Give the list of all possible Milnor numbers arising from
deformations of $f_{0}$ (see {\cite{Wal10}} for partial results in the
non-degenerate case).
\end{enumerate}

\section{Preliminaries}

\label{se2}

Let $\mathbbm{N}$ be the set of nonnegative
integers and $\mathbbm{R}_{+}$ be the set of nonnegative real
numbers. Let $f_{0}\left(  x,y\right)  =\sum_{\left(  i,j\right)
\in\mathbbm{N}^{2}}a_{ij}x^{i}y^{j}$ be a singularity. Put
$\operatorname{supp}\left(  f_{0}\right)  :=\left\{  \left(  i,j\right)
\in\mathbbm{N}^{2}:a_{ij}\neq0\right\}  $. The {\emph{Newton diagram of
$f_{0}$}\/} is defined as the convex hull of the set
\[
\bigcup_{\left(  i,j\right)  \in\operatorname{supp}\left(  f_{0}\right)
}\left(  i,j\right)  +\mathbbm{R}_{+}^{2}%
\]
and is denoted by $\Gamma_{+}\left(  f_{0}\right)  $. It is easy to see that
the boundary (in $\mathbbm{R}^{2}$) of the diagram $\Gamma_{+}\left(
f_{0}\right)  $ is a sum of two half-lines and a finite number of compact line
segments. The set of those line
segments will be called the {\emph{Newton polygon of the singularity $f_{0}$%
}\/} and denoted by $\Gamma\left(  f_{0}\right)  $. For each segment
$\gamma\in\Gamma\left(  f_{0}\right)  $ we define a weighted homogeneous
polynomial
\[
\left(  f_{0}\right)  _{\gamma}:=\sum_{\left(  i,j\right)  \in\gamma}%
a_{ij}x^{i}y^{j}.
\]

A singularity $f_{0}$ is called {\emph{non-degenerate (in the Kouchnirenko
sense) on a segment $\gamma\in\Gamma\left(  f_{0}\right)  $}\/} iff the
system
\[
\frac{\partial\left(  f_{0}\right)  _{\gamma}}{\partial x}\left(  x,y\right)
=0,\frac{\partial\left(  f_{0}\right)  _{\gamma}}{\partial y}\left(
x,y\right)  =0
\]
has no solutions in $\mathbbm{C}^{\ast}\times\mathbbm{C}^{\ast}$. $f_{0}$ is
called {\emph{non-degenerate}\/} iff it is non-degenerate on every segment
$\gamma\in\Gamma\left(  f_{0}\right)  $.

\label{Punkt 2.3}

For the sake of simplicity, we consider the case of {\emph{convenient}\/} singularities $f_{0}$, i.e. we
suppose that $\Gamma_{+}\left(  f_{0}\right)  $ intersects both coordinate axes
in $\mathbbm{R}^{2}$. For such
singularities we denote by $S$ the area of the domain bounded by the
coordinate axes and the Newton polygon $\Gamma\left(  f_{0}\right)  $. Let
$a$ (resp. $b$) be the distance of the point
$\left(  0,0\right)  $ to the intersection of $\Gamma_{+}\left(  f_{0}\right)
$ with the horizontal (resp. vertical) axis. The number
\[
\nu\left(  f_{0}\right)  :=2S-a-b+1
\]
is called the {\emph{Newton number of the singularity $f_{0}$}\/}. Let us recall
Planar Kouchnirenko Theorem.

\begin{theorem}
\label{Th Kouch}{\textup{\textbf{({\cite{Kou76}}) }}} For a
convenient singularity $f_{0}$ we have:

\begin{enumerate}
\item $\mu(f_{0}) \geqslant\nu(f_{0})$,

\item if $f_{0}$ is non-degenerate then $\mu(f_{0}) = \nu(f_{0})$.
\end{enumerate}
\end{theorem}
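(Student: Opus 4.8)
The plan is to obtain the inequality (1) from the equality (2) by a semicontinuity argument, so that all the real content sits in (2). For the reduction, fix $\Gamma_{+}:=\Gamma_{+}(f_{0})$ and perturb only the coefficients of $f_{0}$ that are attached to the monomials lying on the polygon $\Gamma(f_{0})$. Non-degeneracy on a segment $\gamma$ is the non-vanishing of a resultant in these coefficients, hence a non-empty Zariski-open condition; so there is a deformation $(f_{t})$ of $f_{0}$ with $\Gamma_{+}(f_{t})=\Gamma_{+}$ for every $t$ (whence $\nu(f_{t})=\nu(f_{0})$) and with $f_{t}$ non-degenerate for $t\neq 0$. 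Since a convenient non-degenerate germ has an isolated singularity, the upper semicontinuity of the Milnor number together with (2) yields
\[
\mu(f_{0})\geqslant\mu(f_{t})=\nu(f_{t})=\nu(f_{0})\qquad(t\neq 0),
\]
which is precisely (1).

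To prove (2) I would work in $\mathcal{O}^{2}=\mathbbm{C}\{x,y\}$ and use $\mu(f_{0})=\dim_{\mathbbm{C}}\mathcal{O}^{2}/(f_{x},f_{y})=i_{0}(f_{x},f_{y})$, writing $f_{x},f_{y}$ for the two partial derivatives. The polygon $\Gamma:=\Gamma(f_{0})$ determines a piecewise-linear Newton order $\phi$ on $\mathbbm{N}^{2}$ (the support function of $\Gamma_{+}$) and hence a filtration of $\mathcal{O}^{2}$ with associated graded $G$; along a segment $\gamma$ with primitive inner normal $(p,q)$ the order is read off from $pi+qj$, whose minimum over $\Gamma_{+}$ is attained exactly on $\gamma$. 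Differentiating shows that the leading Newton form of $f_{x}$ (resp. $f_{y}$) for the weight $(p,q)$ is, up to a scalar, $\partial(f_{0})_{\gamma}/\partial x$ (resp. $\partial(f_{0})_{\gamma}/\partial y$). Non-degeneracy of $f_{0}$ says exactly that these two face partials have no common zero in $\mathbbm{C}^{\ast}\times\mathbbm{C}^{\ast}$ on each $\gamma$, so the leading forms $\operatorname{in}(f_{x})$ and $\operatorname{in}(f_{y})$ have only finitely many common zeros.

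The heart of the matter is that this finiteness makes $\operatorname{in}(f_{x}),\operatorname{in}(f_{y})$ a regular sequence (a homogeneous system of parameters) in the two-dimensional Cohen--Macaulay ring $G$. Consequently $f_{x},f_{y}$ form a standard basis of $(f_{x},f_{y})$, the initial ideal equals $(\operatorname{in}f_{x},\operatorname{in}f_{y})$, and the Newton filtration degenerates $\mathcal{O}^{2}/(f_{x},f_{y})$ flatly onto $G/(\operatorname{in}f_{x},\operatorname{in}f_{y})$ with no drop in dimension:
\[
\mu(f_{0})=\dim_{\mathbbm{C}}\mathcal{O}^{2}/(f_{x},f_{y})=\dim_{\mathbbm{C}}G/(\operatorname{in}f_{x},\operatorname{in}f_{y}).
\]
It then remains to evaluate the right-hand side combinatorially. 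Since the Newton diagrams of $f_{x}$ and $f_{y}$ are those of $f_{0}$ shifted by $(-1,0)$ and $(0,-1)$, the leading forms are weighted-homogeneous of the expected degrees on each segment, and a B\'ezout-type (equivalently, by Pick's theorem, a lattice-point) count, summed over the successive segments of $\Gamma$ by means of a smooth toric subdivision whose rays contain all the segment normals, telescopes to exactly $2S-a-b+1=\nu(f_{0})$, using that the bounded region has area $S$ and carries $a+1$ and $b+1$ lattice points on the two axes.

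I expect the main obstacle to be the regular-sequence/flatness step: showing that non-degeneracy forces $\operatorname{in}(f_{x}),\operatorname{in}(f_{y})$ to be a regular sequence in $G$, so that the initial ideal is generated by the two leading forms and no dimension is lost in the degeneration. This is exactly the point that breaks down for degenerate $f_{0}$ -- there the leading Jacobian forms share a one-dimensional zero locus, the graded quotient becomes infinite-dimensional, and only the inequality (1) survives. The two subsidiary technical difficulties are handling the associated graded across several segments simultaneously (matching the weights at the vertices of $\Gamma$, which is where the toric subdivision is really needed) and the careful bookkeeping of the boundary corrections $-a-b$ and the constant $+1$ in the lattice-point count.
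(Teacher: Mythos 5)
A preliminary remark: the paper contains no proof of this statement \emdash{} it is quoted as an external result of Kouchnirenko {\cite{Kou76}} \emdash{} so your attempt can only be judged on its own merits, not against an internal argument.

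Your reduction of (1) to (2) is essentially complete and correct: non-degeneracy is a Zariski-open condition on the coefficients attached to the monomials of $\Gamma\left( f_{0}\right)$, it is non-empty (a quasi-homogeneous face polynomial in two variables is non-degenerate as soon as its factors $y^{q}-c_{i}x^{p}$ have pairwise distinct $c_{i}$, which generic coefficients achieve), the perturbation preserves $\Gamma_{+}$ and hence $\nu$, and the upper semicontinuity of $\mu$ \emdash{} which the paper itself invokes from {\cite{GLS07}} \emdash{} yields $\mu\left( f_{0}\right) \geqslant \mu\left( f_{t}\right) = \nu\left( f_{t}\right) = \nu\left( f_{0}\right)$. (One should also note that the monomials of $\Gamma$ satisfy $i+j\geqslant 2$, so the perturbed family is indeed a deformation in the paper's sense.) This part I accept.

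The proof of (2), however, has a genuine gap exactly where you place it, and the gap is not a routine verification but the entire content of the theorem. Two steps are asserted without proof. First, the claim that non-degeneracy makes $\operatorname{in}\left( f_{x}\right), \operatorname{in}\left( f_{y}\right)$ a regular sequence in the Newton-graded ring $G$: note that $G$ is a polynomial ring only when $\Gamma$ consists of a single segment; in general it is a toric face ring, a union of affine toric pieces glued along the rays of the normal fan, and its Cohen--Macaulayness (or a substitute for it) must be established before ``system of parameters implies regular sequence'' can be used. Moreover, non-degeneracy only excludes common zeros of the face partials in $\mathbbm{C}^{\ast}\times\mathbbm{C}^{\ast}$; the face partials can and do share zeros on the coordinate axes, and showing that these do not obstruct the parameter property is precisely the vertex-matching issue you flag but do not resolve. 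Second, the combinatorial identity $\dim_{\mathbbm{C}} G/\left( \operatorname{in} f_{x}, \operatorname{in} f_{y}\right) = 2S-a-b+1$ is stated as a ``telescoping'' count but never carried out; in Kouchnirenko's original treatment this is done via the Euler characteristic of a Koszul complex, and it is where the specific normalization $-a-b+1$ of $\nu$ actually arises. Since both pillars are presupposed rather than proven, the attempt is a correct and classical \emph{plan} (indeed, it is Kouchnirenko's own strategy) but not a proof. If you want a genuinely more elementary route in two variables, the intersection-multiplicity arguments of P{\l}oski {\cite{Plo90,Plo99}} compute $i_{0}\left( f_{x},f_{y}\right)$ segment by segment and avoid the graded-ring machinery altogether.
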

Theorem \ref{Th Kouch} can be completed in the following way.
\begin{theorem}
\label{Th Plo}{\textup{\textbf{(P{\l }oski, {\cite{Plo90,Plo99}}) }}}If for a
convenient singularity $f_{0}$ there is $\nu(f_{0}) = \mu(f_{0})$ then $f_{0}$ is non-degenerate.
\end{theorem}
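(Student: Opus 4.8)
The plan is to prove the contrapositive: if $f_0$ is convenient and degenerate on some segment $\gamma \in \Gamma(f_0)$, then $\mu(f_0) > \nu(f_0)$; together with Theorem \ref{Th Kouch} this gives the claim. First I would record the structure of a face polynomial. Writing $(p,q) \in \mathbb{N}^2$ for the primitive (positive) inner normal of $\gamma$, every monomial $x^i y^j$ occurring in $(f_0)_\gamma$ lies on a line $pi + qj = d$, so $(f_0)_\gamma$ is weighted homogeneous and factors as
\[ (f_0)_\gamma = c\, x^{\alpha} y^{\beta} \prod_{k} \left( y^{p} - c_k x^{q} \right)^{m_k}, \qquad c_k \in \mathbb{C}^{*} . \]
Differentiating Euler's identity $p\,x\,\partial_x (f_0)_\gamma + q\,y\,\partial_y (f_0)_\gamma = d\,(f_0)_\gamma$, a common zero of the two partials in $\mathbb{C}^{*}\times\mathbb{C}^{*}$ forces $(f_0)_\gamma$ itself to vanish there and to be singular, which happens if and only if one of the factors is repeated ($m_k \geq 2$). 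Thus degeneracy on $\gamma$ is equivalent to the face polynomial having a multiple factor $(y^p - c_k x^q)$.

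With this reformulation I would set up a deformation that keeps the Newton diagram fixed. Replacing a repeated factor $(y^p - c_k x^q)^{m_k}$ by $\prod_{i=1}^{m_k}(y^p - c_k^{(i)} x^q)$ with pairwise distinct $c_k^{(i)}$ close to $c_k$ (and doing this on every degenerate segment) produces a family $f_t$ with $\Gamma_+(f_t) = \Gamma_+(f_0)$ for all small $t$ and with $f_t$ non-degenerate for $t \neq 0$. Since the Newton number depends only on the diagram, $\nu(f_t) = \nu(f_0)$, and Theorem \ref{Th Kouch}(2) gives $\mu(f_t) = \nu(f_0)$ for $t \neq 0$. Upper semicontinuity of the Milnor number then yields $\mu(f_0) \geq \mu(f_t) = \nu(f_0)$, so the entire content of the theorem is the \emph{strict} inequality.

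To obtain strictness I would argue at the level of the polar intersection number, using $\mu(f_0) = i_0(\partial f_0/\partial x, \partial f_0/\partial y)$ for convenient $f_0$. The repeated factor produces branches of $\{\partial f_0/\partial x = 0\}$ and of $\{\partial f_0/\partial y = 0\}$ that are tangent along the direction $y^p = c_k x^q$; expanding one curve in Newton--Puiseux series and substituting it into the other, the coincidence of these tangent directions forces the intersection multiplicity to exceed the value it takes for the separated (generic) configuration. Equivalently, one may unfold the multiplicity one step at a time and show that each splitting of a double direction into two simple ones strictly drops the number of critical points while preserving the diagram; iterating down to the non-degenerate model then gives $\mu(f_0) > \nu(f_0)$.

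I expect the strict inequality to be the main obstacle: the bound $\mu \geq \nu$ and the deformation are soft, but quantifying the strict excess requires controlling how the critical points of $f_t$ behave as $t \to 0$, i.e. showing that distinct critical points of the nearby non-degenerate members collide or escape toward the degenerate torus direction at $t = 0$. Making this precise — either through the Puiseux/intersection-multiplicity computation along the degenerate segment, or through an explicit lower bound on $i_0(\partial f_0/\partial x, \partial f_0/\partial y)$ strictly above the toric (Newton) bound — is the crux, and the remainder of the argument is bookkeeping over the finitely many segments and repeated factors.
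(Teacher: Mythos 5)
First, note that the paper itself gives no proof of this statement: Theorem~\ref{Th Plo} is quoted as P{\l}oski's theorem with a citation to \cite{Plo90,Plo99}, so your proposal has to stand on its own. It does not: there is a genuine gap, and it sits exactly where you yourself locate ``the crux.'' Your first two paragraphs are fine --- the factorization of the face polynomial, the equivalence of degeneracy on $\gamma$ with a repeated factor $(y^p - c_k x^q)^{m_k}$, $m_k \geq 2$, the diagram-preserving splitting deformation, and semicontinuity are all correct --- but they only re-derive $\mu(f_0) \geqslant \nu(f_0)$, i.e.\ part 1 of Theorem~\ref{Th Kouch}. The strict inequality, which as you say is the entire content of the theorem, is never established. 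Neither of the two mechanisms you offer is a proof: the claim that tangency of polar branches ``forces the intersection multiplicity to exceed the value it takes for the separated configuration'' is unquantified (showing the excess is nonzero \emph{is} the theorem, and you never verify that the polars even have the asserted tangent branches); and the claim that ``each splitting of a double direction into two simple ones strictly drops the number of critical points'' is circular --- given that the fully split, non-degenerate member has $\mu = \nu$, that claim is literally equivalent to $\mu(f_0) > \nu(f_0)$.

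Moreover, this is not a gap that more bookkeeping can close, because everything you actually prove is dimension-independent, while the statement itself fails in $n \geqslant 3$ variables. For example, $f = (x+z)^2 + y^2 + yz$ is convenient with a nonsingular Hessian, so $\mu(f) = 1$; its support lies on the plane $i+j+k=2$, so its Newton diagram is that of $x^2+y^2+z^2$ and $\nu(f) = 1$; yet $f$ is degenerate on the compact edge with face polynomial $(x+z)^2$, whose gradient $\left( 2(x+z), 0, 2(x+z) \right)$ vanishes at $(1,t,-1) \in \left( \mathbb{C}^* \right)^3$. So ``split the repeated factor, keep the diagram, use semicontinuity'' can never by itself yield strictness: any correct proof must inject genuinely planar information. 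That planar input --- in P{\l}oski's papers a quantitative intersection-theoretic analysis of the partial derivatives against the Newton polygon --- is precisely what is missing; your Puiseux-expansion idea points in the right (two-dimensional) direction, but as written it is a plan, not an argument.
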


We will also need a ,,global'' result concerning projective algebraic curves.

\begin{theorem}
\label{Th GP}{\textup{\textbf{({\cite[Prop. 6.3]{GP01}}) }}}Let $\mathcal{C}
\subset\mathbbm{C}\mathbbm{P}^{2}$ be a projective algebraic curve of degree
$d$. Suppose that $m$ irreducible components of $\mathcal{C}$ pass through a
point $P \in\mathcal{C}$. Then the Milnor number $\mu_{P} \left(  \mathcal{C}
\right)  $ of $\mathcal{C}$ at $P$ satisfies the inequality
\[
\mu_{P} \left(  \mathcal{C} \right)  \leqslant\left(  d - 1 \right)  \left(  d
- 2 \right)  + m - 1.
\]

\end{theorem}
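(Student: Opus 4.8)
The plan is to derive this local bound from two classical ingredients: Milnor's formula relating the local Milnor number, the $\delta$-invariant and the number of branches of a plane curve germ, and the arithmetic-genus ($\delta$-invariant) formula for a projective plane curve obtained from its normalization. Throughout I assume $\mathcal{C}$ reduced, so that all local invariants at a point are defined.

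First I would discard the components that miss $P$. Let $\mathcal{C}_{1},\ldots,\mathcal{C}_{m}$ be precisely the irreducible components of $\mathcal{C}$ passing through $P$ and set $\mathcal{C}'\assign\mathcal{C}_{1}\cup\cdots\cup\mathcal{C}_{m}$, a reduced curve of some degree $d'\leqslant d$. Each component of $\mathcal{C}$ that avoids $P$ contributes a non-vanishing factor to the local equation at $P$, hence the germs of $\mathcal{C}$ and of $\mathcal{C}'$ at $P$ coincide; consequently $\mu_{P}(\mathcal{C})=\mu_{P}(\mathcal{C}')$, and likewise the local invariant $\delta_{P}$ and the number $m_{P}$ of local branches at $P$ are the same for both curves. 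Thus it is enough to bound $\mu_{P}(\mathcal{C}')$, where now \emph{all} $m$ global components pass through $P$, so that $\mathcal{C}'$ is connected.

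Next I would bring in the two formulas. Milnor's formula gives $\mu_{P}=2\delta_{P}-m_{P}+1$ for the germ at $P$, and since every global component through $P$ carries at least one local branch there, $m_{P}\geqslant m$. For the global estimate, let $\pi\colon\widetilde{\mathcal{C}'}\to\mathcal{C}'$ be the normalization; the exact sequence
\[
0\to\mathcal{O}_{\mathcal{C}'}\to\pi_{\ast}\mathcal{O}_{\widetilde{\mathcal{C}'}}\to\mathcal{S}\to0,
\]
with $\mathcal{S}$ a skyscraper sheaf of total length $\dim_{\mathbbm{C}}\mathcal{S}=\sum_{Q}\delta_{Q}$, gives on Euler characteristics
\[
1-\tfrac{(d'-1)(d'-2)}{2}=\chi(\mathcal{O}_{\mathcal{C}'})=\chi(\mathcal{O}_{\widetilde{\mathcal{C}'}})-\sum_{Q}\delta_{Q}=m-\sum_{i}g_{i}-\sum_{Q}\delta_{Q},
\]
where I used $h^{0}(\mathcal{O}_{\mathcal{C}'})=1$ (connectedness), $p_{a}(\mathcal{C}')=\tfrac{(d'-1)(d'-2)}{2}$, and that $\widetilde{\mathcal{C}'}$ is a disjoint union of $m$ smooth curves of genera $g_{i}\geqslant0$. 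Hence $\sum_{Q}\delta_{Q}=\tfrac{(d'-1)(d'-2)}{2}+(m-1)-\sum_{i}g_{i}$, and dropping the nonnegative terms $g_{i}$ and the $\delta_{Q}$ for $Q\neq P$ yields $\delta_{P}\leqslant\tfrac{(d'-1)(d'-2)}{2}+(m-1)$.

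Finally I would assemble the pieces: substituting this bound and $m_{P}\geqslant m$ into Milnor's formula gives
\[
\mu_{P}=2\delta_{P}-m_{P}+1\leqslant(d'-1)(d'-2)+2(m-1)-m+1=(d'-1)(d'-2)+m-1,
\]
and since $t\mapsto(t-1)(t-2)$ is nondecreasing on integers $\geqslant1$ and $d'\leqslant d$, the right-hand side is at most $(d-1)(d-2)+m-1$. The one delicate point is the bookkeeping between the \emph{global} count $m$ of components through $P$ and the \emph{local} branch count $m_{P}$: the argument succeeds only because the factor $2$ multiplying $\delta_{P}$ absorbs the surplus $2(m-1)$ down to $m-1$ exactly when $m_{P}\geqslant m$. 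Recognizing that one may freely delete the components missing $P$ (which lowers the degree in the right direction) and that this forces connectedness of $\mathcal{C}'$ is the conceptual heart of the argument.
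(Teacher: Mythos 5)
Your proof is correct, but there is nothing in the paper to compare it against: Theorem~\ref{Th GP} is not proved there at all, it is imported as a black box from Gwo\'zdziewicz--P\l oski \cite[Prop.~6.3]{GP01}. Judged on its own merits, your argument is sound and complete. The reduction to the subcurve $\mathcal{C}'$ of the $m$ components through $P$ is both legitimate (the discarded components are units in the local ring at $P$, and $\mu_{P}$, $\delta_{P}$, $m_{P}$ depend only on the reduced zero-set germ) and essential: applying the genus formula to the whole curve $\mathcal{C}$, which may have $c>m$ components, would only yield $\mu_{P}\leqslant(d-1)(d-2)+2c-m-1$, which is weaker than the claim whenever some component misses $P$. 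From there the pieces fit exactly as you say: Milnor's formula $\mu_{P}=2\delta_{P}-m_{P}+1$, the inequality $m_{P}\geqslant m$, the normalization sequence giving $\sum_{Q}\delta_{Q}=\tfrac{(d'-1)(d'-2)}{2}+m-1-\sum_{i}g_{i}$, and the monotonicity of $t\mapsto(t-1)(t-2)$ on integers $t\geqslant1$ together with $1\leqslant d'\leqslant d$. Two minor remarks: connectedness of $\mathcal{C}'$ is automatic for any projective plane curve (any two components meet, by B\'ezout), so your appeal to the common point $P$, while perfectly fine, is not needed; and the implicit reducedness assumption you flag is indeed the standard reading of the statement, since otherwise $\mu_{P}$ is infinite. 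Your route \emdash{} Milnor's formula plus the arithmetic-genus computation via normalization \emdash{} is the natural one and is very likely close in spirit to the original proof in \cite{GP01}; its benefit here is that it makes the paper's quoted inequality self-contained, resting only on two classical facts about plane curves.
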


The rest of the section is devoted mainly to the concept of a versal
unfolding. It is based on the book by Ebeling {\cite{Ebe07}}.

Let $f_{0} : \left(  \mathbbm{C}^{n}, 0 \right)  \rightarrow\left(
\mathbbm{C}, 0 \right)  $ be a germ of a holomorphic function. An
{\emph{unfolding of $f_{0}$\/}} is a holomorphic germ $F : \left(
\mathbbm{C}^{n} \times\mathbbm{C}^{k}, 0 \right)  \rightarrow\left(
\mathbbm{C}, 0 \right)  $ such that $F \left(  z, 0 \right)  = f_{0} \left(  z
\right)  $ and $F \left(  0, u \right)  = 0$.

Two unfoldings $F : \left(  \mathbbm{C}^{n} \times\mathbbm{C}^{k}, 0 \right)
\rightarrow\left(  \mathbbm{C}, 0 \right)  $ and $G : \left(  \mathbbm{C}^{n}
\times\mathbbm{C}^{k}, 0 \right)  \rightarrow\left(  \mathbbm{C}, 0 \right)  $
of $f_{0}$ are said to be {\emph{equivalent\/}}, if there exists a holomorphic
map-germ
\[
\psi: \left(  \mathbbm{C}^{n} \times\mathbbm{C}^{k}, 0 \right)  \rightarrow
\left(  \mathbbm{C}^{n}, 0 \right)  , \hspace{1em} \psi\left(  z, 0 \right)  =
z, \hspace{1em} \psi\left(  0, u \right)  = 0
\]
{\noindent}such that
\[
G \left(  z, u \right)  = F \left(  \psi\left(  z, u \right)  , u \right)  .
\]
It is easy to see that this notion of equivalence is in fact an equivalence
relation in the set of unfoldings of $f_{0}$.

Let $F : \left(  \mathbbm{C}^{n} \times\mathbbm{C}^{k}, 0 \right)
\rightarrow\left(  \mathbbm{C}, 0 \right)  $ be an unfolding of $f_{0}$ and
$\varphi: \left(  \mathbbm{C}^{l}, 0 \right)  \rightarrow\left(
\mathbbm{C}^{k}, 0 \right)  $ -- a holomorphic map-germ. The {\emph{unfolding
of $f_{0}$ induced from $F$ by $\varphi$\/}} is defined by the formula
\[
G \left(  z, u \right)  = F \left(  z, \varphi\left(  u \right)  \right)  .
\]
An unfolding $F : \left(  \mathbbm{C}^{n} \times\mathbbm{C}^{k}, 0 \right)
\rightarrow\left(  \mathbbm{C}, 0 \right)  $ of $f_{0}$ is called
{\emph{versal\/}} if any unfolding of $f_{0}$ is equivalent to one induced
from $F$.

The following proposition will be useful.

\begin{proposition}
\label{Pro 2}{\textup{\textbf{({\cite[Ch. 4, Prop. 2.4]{Mar82}}) }}}If $f
\in\mathcal{O}^{n}$ is a singularity, $\mathfrak{m}$ is the maximal ideal in
$\mathcal{O}^{n}$, then
\[
\dim_{\mathbbm{C}} \dfrac{\mathcal{O}^{n}}{\mathfrak{m} \left(  \nabla f
\right)  \mathcal{O}^{n}} = \dim_{\mathbbm{C}} \frac{\mathcal{O}^{n}}{\left(
\nabla f \right)  \mathcal{O}^{n}} + n \text{.}%
\]

\end{proposition}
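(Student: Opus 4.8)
The plan is to write $J := \left( \nabla f \right) \mathcal{O}^{n}$ for the gradient (Jacobian) ideal, generated by the $n$ partials $\partial f / \partial z_{i}$, and to compare the two quotients through the chain $\mathfrak{m} J \subseteq J \subseteq \mathcal{O}^{n}$. Since $f$ is a singularity (hence has an isolated critical point), $J$ is $\mathfrak{m}$-primary, so $\mu = \dim_{\mathbbm{C}} \mathcal{O}^{n} / J < \infty$; in particular all the vector spaces below are finite-dimensional. The inclusion $J \hookrightarrow \mathcal{O}^{n}$ induces a short exact sequence of $\mathbbm{C}$-vector spaces
\[
0 \longrightarrow J / \mathfrak{m} J \longrightarrow \mathcal{O}^{n} / \mathfrak{m} J \longrightarrow \mathcal{O}^{n} / J \longrightarrow 0 ,
\]
whence, by additivity of dimension,
\[
\dim_{\mathbbm{C}} \frac{\mathcal{O}^{n}}{\mathfrak{m} J} = \dim_{\mathbbm{C}} \frac{J}{\mathfrak{m} J} + \dim_{\mathbbm{C}} \frac{\mathcal{O}^{n}}{J} = \dim_{\mathbbm{C}} \frac{J}{\mathfrak{m} J} + \mu .
\]
The whole proposition therefore reduces to the single claim that $\dim_{\mathbbm{C}} J / \mathfrak{m} J = n$.

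To prove this claim I would invoke Nakayama's lemma: since $\mathcal{O}^{n} / \mathfrak{m} = \mathbbm{C}$, the number $\dim_{\mathbbm{C}} J / \mathfrak{m} J$ equals the minimal number of generators of the ideal $J$. As $J$ is generated by the $n$ partial derivatives, this yields at once $\dim_{\mathbbm{C}} J / \mathfrak{m} J \leqslant n$. For the reverse inequality I would use that $f$ has an \emph{isolated} singularity: the ideal $J$ is $\mathfrak{m}$-primary, so the partials $\partial f / \partial z_{1}, \ldots, \partial f / \partial z_{n}$ form a system of parameters of the regular local ring $\mathcal{O}^{n}$, which has Krull dimension $n$. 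By Krull's principal ideal theorem an $\mathfrak{m}$-primary ideal of an $n$-dimensional local ring cannot be generated by fewer than $n$ elements --- otherwise $\mathcal{O}^{n} / J$ would have positive Krull dimension, contradicting $\mu < \infty$. Hence the minimal number of generators of $J$ is exactly $n$, i.e. $\dim_{\mathbbm{C}} J / \mathfrak{m} J = n$. Equivalently, one may note that a system of parameters in a Cohen--Macaulay (here regular) local ring is a regular sequence, so the Koszul relations among the partials all have coefficients in $\mathfrak{m}$, which again shows that the partials constitute a minimal generating set.

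Combining the two displays gives $\dim_{\mathbbm{C}} \mathcal{O}^{n} / \mathfrak{m} J = n + \mu$, which is the asserted formula. The only genuine obstacle is the lower bound $\dim_{\mathbbm{C}} J / \mathfrak{m} J \geqslant n$: it is precisely here that the hypothesis of an isolated critical point enters, through the $\mathfrak{m}$-primarity of the gradient ideal and the resulting dimension-theoretic lower bound on the number of its generators. The remainder is the formal homological bookkeeping of the short exact sequence, which is routine.
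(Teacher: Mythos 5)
Your proof is correct, but there is nothing in the paper to compare it against: the paper does not prove Proposition~\ref{Pro 2} at all, it simply quotes it from Martinet \cite[Ch.~4, Prop.~2.4]{Mar82} and uses it as a black box (in Lemma~\ref{Le1} and in the proof of Proposition~\ref{Pro 3}). On its own merits your argument is sound and complete. The short exact sequence $0 \rightarrow J/\mathfrak{m}J \rightarrow \mathcal{O}^{n}/\mathfrak{m}J \rightarrow \mathcal{O}^{n}/J \rightarrow 0$ correctly reduces the identity to the single claim $\dim_{\mathbbm{C}} J/\mathfrak{m}J = n$; Nakayama correctly identifies this dimension with the minimal number of generators of $J$; the upper bound $n$ is immediate; and the lower bound is exactly where the hypothesis is consumed: the isolated critical point gives $\sqrt{J}=\mathfrak{m}$ by the R\"uckert Nullstellensatz (note that $J\subseteq\mathfrak{m}$, i.e.\ $\nabla f(0)=0$, is also part of the hypothesis that $f$ is a singularity and is needed here -- for a non-critical germ the formula is false), and Krull's height theorem then forbids an $\mathfrak{m}$-primary ideal of the $n$-dimensional ring $\mathcal{O}^{n}$ from having fewer than $n$ generators. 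Your parenthetical alternative (regular sequence, hence syzygies generated by Koszul relations, hence minimal generation) is also valid, though stated loosely: the point is not that the Koszul relations have coefficients in $\mathfrak{m}$ -- that is automatic -- but that for a regular sequence \emph{all} relations are combinations of Koszul ones, so no relation can have a unit coefficient. In short, you have supplied a self-contained commutative-algebra proof of a statement the authors chose to import by citation; the trade-off is only brevity versus self-containedness.
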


The main result concerning versal unfoldings is the following.

\begin{theorem}
\label{Th Ebe}Let $f_{0} : \left(  \mathbbm{C}^{n}, 0 \right)  \rightarrow
\left(  \mathbbm{C}, 0 \right)  $ be a singularity and put $\mu= \mu\left(
f_{0} \right)  $. Let $g_{1}, \ldots, g_{\mu+ n - 1} \in\mathcal{O}^{n}$ be
any representatives of a basis of the $\mathbbm{C}$--vector space
$\frac{\mathfrak{m}}{\mathfrak{m} \left(  \nabla f_{0} \right)  }$. Then the
holomorphic germ
\[
F : \left(  \mathbbm{C}^{n} \times\mathbbm{C}^{\mu+ n - 1}, 0 \right)
\rightarrow\left(  \mathbbm{C}, 0 \right)
\]
defined as
\[
F \left(  z, u \right)  := u_{1} g_{1} \left(  z \right)  + \ldots+ u_{\mu+ n
- 1} g_{\mu+ n - 1} \left(  z \right)  + f_{0} \left(  z \right)
\]
is a versal unfolding of $f_{0}$.
\end{theorem}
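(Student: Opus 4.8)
The plan is to derive the theorem from the classical infinitesimal criterion for versality, which splits the argument into a short algebraic part (where Proposition \ref{Pro 2} does the bookkeeping) and a substantial analytic part. First I would record why the statement is even well-posed: since each $g_i$ represents a class in $\mathfrak{m}/\mathfrak{m}(\nabla f_0)$ we have $g_i(0)=0$, hence $F(z,0)=f_0(z)$ and $F(0,u)=0$, so $F$ is genuinely an unfolding of $f_0$ in the sense defined above. Proposition \ref{Pro 2} gives $\dim_{\mathbbm{C}} \mathcal{O}^n/\mathfrak{m}(\nabla f_0)=\mu+n$, and because $\mathcal{O}^n/\mathfrak{m}\cong\mathbbm{C}$ this forces $\dim_{\mathbbm{C}} \mathfrak{m}/\mathfrak{m}(\nabla f_0)=\mu+n-1$; thus a basis has exactly $\mu+n-1$ elements, matching the parameters $u_1,\dots,u_{\mu+n-1}$.

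Second, the infinitesimal (algebraic) step. The admissible coordinate changes $\psi$ satisfy $\psi(z,0)=z$ and $\psi(0,u)=0$, so an infinitesimal such change is a vector field $\sum_j\xi_j\,\partial/\partial z_j$ with each $\xi_j\in\mathfrak{m}$; its effect on $f_0$ is $\sum_j\xi_j\,\partial f_0/\partial z_j$, whence the tangent space to the right-equivalence orbit of $f_0$ equals $\mathfrak{m}(\nabla f_0)$. Since unfoldings here satisfy $F(0,u)=0$, the relevant space of infinitesimal deformations is taken inside $\mathfrak{m}$, so it is precisely $\mathfrak{m}/\mathfrak{m}(\nabla f_0)$; and the directions actually realized by $F$ are spanned by $\partial F/\partial u_i=g_i$. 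As $\{g_i\}$ is by hypothesis a basis of $\mathfrak{m}/\mathfrak{m}(\nabla f_0)$, the unfolding $F$ is infinitesimally versal (indeed miniversal).

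Third, the analytic step: infinitesimal versality implies versality. Here I would use the classical homotopy method. Given an arbitrary unfolding $G(z,v)$ with $v\in(\mathbbm{C}^l,0)$, it suffices to show that every extra deformation direction can be absorbed by $F$ together with a coordinate change; by induction on the number of parameters this reduces to proving that for each $h\in\mathfrak{m}$ the one-parameter extension $H(z,u,t):=F(z,u)+t\,h(z)$ is equivalent to an unfolding induced from $F$. Seeking $\psi_t,\varphi_t$ with $H(z,u,t)=F(\psi_t(z,u),\varphi_t(u))$ and differentiating in $t$ leads, to first order, to a homological equation of the form
\[
h(z)=\sum_{j=1}^{n}\frac{\partial F}{\partial z_j}\,\xi_j+\sum_{i=1}^{\mu+n-1}\frac{\partial F}{\partial u_i}\,\eta_i,
\]
to be solved with $\xi_j\in\mathfrak{m}$ in the $z$-variables and, crucially, with the $\eta_i$ depending on $(u,t)$ only.

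The main obstacle is exactly this last step. Infinitesimal versality solves the homological equation at the central point $u=t=0$, but propagating the solution to a full neighborhood while keeping the parameter-direction coefficients $\eta_i$ free of $z$ requires the Weierstrass--Malgrange preparation theorem to control $\mathcal{O}^n$ as a module over the parameter ring. Once the equation is solved with parameters, I would integrate the resulting parameter-dependent vector fields $\xi,\eta$ (an ODE with the normalizations $\psi_t(0,u)=0$, $\varphi_0=\mathrm{id}$) to obtain the finite maps $\psi_t,\varphi_t$ for small $t$; evaluating at $t=1$ yields the required equivalence, the reduction goes through, and the versality of $F$ follows.
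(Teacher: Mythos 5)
Your overall route is the same one the paper itself relies on: the paper gives no self-contained proof of Theorem \ref{Th Ebe}, only the remark that the proof ``runs in a very similar way to that given by Ebeling'' \cite[Prop.~3.17]{Ebe07} (see also \cite[Thm.~3.4]{Wal81}), and that proof is precisely the argument you outline \emdash{} infinitesimal versality plus the preparation-theorem/homotopy method \emdash{} adapted to the constrained category of unfoldings with $F(0,u)=0$ and equivalences with $\psi(0,u)=0$. Your adaptation is the correct one: the constraint forces the vector-field components $\xi_j$ into $\mathfrak{m}$ and the deformation directions into $\mathfrak{m}$, which is exactly why $\mathfrak{m}/\mathfrak{m}\left(\nabla f_0\right)$ replaces $\mathcal{O}^n/\left(\nabla f_0\right)$, and your dimension count via Proposition \ref{Pro 2} is right.

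There is, however, one step that as literally written would fail: the induction on the number of parameters does \emph{not} reduce versality to absorbing extensions of the special form $H(z,u,t)=F(z,u)+t\,h(z)$ with $h$ depending on $z$ alone. For an arbitrary unfolding $G(z,v)$, $v\in\left(\mathbbm{C}^l,0\right)$, the standard scheme is to concatenate, $H(z,u,v):=F(z,u)+G(z,v)-f_0(z)$, and strip off the $v$-parameters one at a time; at each stage the increment depends on $z$, on $u$, \emph{and} on the remaining parameters, and the base unfolding is no longer $F$ itself but the one produced by the previous step (which is still infinitesimally versal, since it contains the directions $g_1,\ldots,g_{\mu+n-1}$). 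Even unfoldings with a single parameter, $G=f_0+v\,h(z,v)$, escape your special form. So the one-parameter absorption lemma must be stated and proved for an arbitrary extension $H(z,w,t)$ whose restriction $H|_{t=0}$ is infinitesimally versal, with the homological equation written with coefficients $\partial H/\partial z_j$ and $\partial H/\partial w_i$ (not $\partial F/\partial z_j$, $\partial F/\partial u_i$) and required to hold identically in a neighborhood in $(z,w,t)$. This is exactly where your appeal to the Malgrange preparation theorem belongs, and with the lemma in this generality the rest of your plan (solving with $\eta_i$ independent of $z$, then integrating the time-dependent vector fields and covering $t\in[0,1]$ by finitely many small steps) goes through as in Ebeling's proof; no new idea is needed, only the correct level of generality in the induction step.
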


{\remark{The proof of the above theorem runs in a very similar way to that
given by Ebeling ({\cite[Prop. 3.17]{Ebe07}}); see also {\cite[Thm.
3.4]{Wal81}} for a more general, but less explicit, approach to the concept of
a versal unfolding and a proof of Theorem \ref{Th Ebe}.}}{\medskip}

Let $f : \left(  \mathbbm{C}^{n}, 0 \right)  \rightarrow\left(  \mathbbm{C}^{}%
, 0 \right)  $ and $g : \left(  \mathbbm{C}^{m}, 0 \right)  \rightarrow\left(
\mathbbm{C}, 0 \right)  $ be two germs of holomorphic functions. We say that
{\emph{$f$ is stably equivalent to $g$\/}} (see {\cite{AGV85}}) iff there
exists $p \in\mathbbm{N}, p \geqslant\max\left(  m, n \right)  $, such that
$\widetilde{f} := f \left(  z_{1}, \ldots, z_{n} \right)  + z_{n + 1}^{2} +
\ldots+ z_{p}^{2}$ is biholomorphically equivalent to $\widetilde{g} := g
\left(  w_{1}, \ldots, w_{m} \right)  + w_{m + 1}^{2} + \ldots+ w_{p}^{2}$,
i.e. there exists a biholomorphism $\Phi: \left(  \mathbbm{C}^{p}, 0 \right)
\rightarrow\left(  \mathbbm{C}^{p}, 0 \right)  $ such that $\widetilde{f}
\circ\Phi= \widetilde{g} $.

It is easy to check that the Milnor number of a singularity is an invariant of the 
stable equivalence. The same is true for the jump of a singularity.

\begin{proposition}
\label{Pro 3}The jump of a singularity is an invariant of the stable equivalence.
\end{proposition}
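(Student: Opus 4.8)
The plan is to show that both the Milnor number and the jump are preserved under the two operations that generate stable equivalence: (i) biholomorphic change of coordinates, and (ii) the ``suspension'' $f(z_1,\ldots,z_n) \rightsquigarrow f(z_1,\ldots,z_n)+z_{n+1}^2$. Since stable equivalence is by definition a finite chain of such operations (compose a biholomorphism with the addition of squared variables), it suffices to treat each operation separately, and the claim for the general case follows by transitivity.

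First I would record the (well-known) invariance of the Milnor number itself. Under a biholomorphism $\Phi$, the chain rule gives $\nabla(f\circ\Phi)(z)=\nabla f(\Phi(z))\cdot D\Phi(z)$, and since $D\Phi(0)$ is invertible, the ideals $(\nabla(f\circ\Phi))$ and $(\nabla f)\circ\Phi$ coincide up to the induced isomorphism of $\mathcal{O}^n$, so the quotient dimensions agree and $\mu(f\circ\Phi)=\mu(f)$. For the suspension, one computes directly that $\mathcal{O}^{n+1}/(\nabla(f+z_{n+1}^2)) = \mathcal{O}^{n+1}/\bigl((\nabla f),\,2z_{n+1}\bigr)$, and killing the variable $z_{n+1}$ identifies this with $\mathcal{O}^n/(\nabla f)$, whence $\mu$ is unchanged. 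In particular $\widetilde f$ and $\widetilde g$ of the definition have the same Milnor number as $f$ and $g$ respectively, so that $\mu$ is an invariant of stable equivalence.

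The heart of the matter is the jump, and here the key step is to set up a bijective correspondence between deformations of $f_0$ and deformations of its image under each operation, in a way that preserves the jump of each individual deformation. For a biholomorphism $\Phi$, given a deformation $(f_s)$ of $f_0$ one checks that $(f_s\circ\Phi)$ is a deformation of $f_0\circ\Phi$ (conditions 1--3 in the definition of a deformation are stable under composing with a fixed biholomorphism in the $z$-variables), and by the Milnor-number computation above $\mu(f_s\circ\Phi)=\mu(f_s)$ for every $s$; hence $\lambda((f_s\circ\Phi))=\lambda((f_s))$, and this correspondence is invertible via $\Phi^{-1}$, so the two singularities realize exactly the same set of deformation-jumps and therefore the same minimal nonzero jump. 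For the suspension, given a deformation $(f_s)$ of $f_0$ I would send it to $(f_s+z_{n+1}^2)$, a deformation of $f_0+z_{n+1}^2$, with the same Milnor numbers by the suspension computation; the only thing to verify is that every deformation of $f_0+z_{n+1}^2$ has, up to the jump it produces, the same effect as one of this suspended form — which is where the main obstacle lies.

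The main obstacle is precisely this last surjectivity point: a priori a deformation of $f_0+z_{n+1}^2$ need not be of the split form $f_s(z)+z_{n+1}^2$, so one must argue that allowing the extra variable $z_{n+1}$ to interact nontrivially does not produce any new, smaller jumps. The natural tool is the versal-unfolding machinery of Theorem \ref{Th Ebe}: every deformation is induced from the versal unfolding by an appropriate base-change map $\varphi$, and one can analyze how the versal unfolding of $f_0+z_{n+1}^2$ relates to that of $f_0$. Concretely, a basis of $\mathfrak{m}/\mathfrak{m}(\nabla f_0)$ extends (using Proposition \ref{Pro 2} to track dimensions) to a basis for the suspended germ, with the extra generator contributing only the direction $z_{n+1}$; any monomial containing $z_{n+1}$ can be absorbed by a coordinate change that completes the square, reducing an arbitrary deformation to an equivalent split one without changing its Milnor numbers. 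Carrying this reduction out carefully shows the set of realizable jumps is the same for $f_0$ and $f_0+z_{n+1}^2$, which closes the argument.
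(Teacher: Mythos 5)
Your skeleton is essentially the paper's: reduce to the single suspension $f_0(z)\mapsto f_0(z)+z_{n+1}^2$, get $\lambda(f_0)\geqslant\lambda(f_0+z_{n+1}^2)$ from split deformations, and for the converse write a jump-realizing deformation $(g_s)$ of $g_0:=f_0+z_{n+1}^2$ in the normal form of Theorem \ref{Th Ebe},
\[ g_s(z,z_{n+1})=v_1(s)h_1(z)+\cdots+v_{\mu+n-1}(s)h_{\mu+n-1}(z)+v_{\mu+n}(s)z_{n+1}+f_0(z)+z_{n+1}^2, \]
using that $h_1,\ldots,h_{\mu+n-1},z_{n+1}$ is a basis of $\mathfrak{m}_{n+1}/\mathfrak{m}_{n+1}(\nabla g_0)$. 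But your final step --- that the term $v_{\mu+n}(s)z_{n+1}$ ``can be absorbed by a coordinate change that completes the square \ldots\ without changing its Milnor numbers'' --- is a genuine gap. If $v_{\mu+n}(s)\neq 0$, then $\partial g_s/\partial z_{n+1}(0)=v_{\mu+n}(s)\neq 0$, so $g_s$ is \emph{smooth} at the origin and $\mu(g_s)=0$ (smooth members are explicitly allowed in deformations; see the Remark in Section \ref{Par 1}). Completing the square is the translation $z_{n+1}\mapsto z_{n+1}-v_{\mu+n}(s)/2$: it does not fix the origin, it creates the constant term $-v_{\mu+n}(s)^2/4$ (so the resulting family violates the condition $f(s,0)=0$ in the definition of a deformation), and the split family it produces has members of Milnor number $\mu(f_s)$, in general nonzero. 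So your reduction does change Milnor numbers, hence jumps, and it does not establish that every jump realizable for $g_0$ is realizable for $f_0$.

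What is missing is a direct argument about smooth members, and this is exactly what the paper supplies. It shows that a deformation realizing $\lambda(g_0)$ cannot consist of smooth germs: such a deformation would have jump exactly $\mu(g_0)$, whereas the auxiliary deformation $\widetilde g_s:=s(z_1^2+\cdots+z_n^2)+g_0$ satisfies $\mu(\widetilde g_s)=1$ for small $s\neq 0$, giving the nonzero jump $\mu(g_0)-1<\mu(g_0)$ --- provided $\mu(f_0)\geqslant 2$. Hence $v_{\mu+n}\equiv 0$, and the minimal deformation is automatically split, with no coordinate change needed. This forces a separate (easy) treatment of the case $\mu(f_0)=1$, which your proposal also omits. (Alternatively, your argument could be repaired by noting that when $v_{\mu+n}\not\equiv 0$ the jump realized is $\mu(g_0)=\mu(f_0)$, which is also realized for $f_0$ by the smooth deformation $f_0+sz_1$; either way, the possibility of smooth members must be confronted head-on --- completing the square does not dispose of it.)
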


\begin{proof}
Since obviously $\lambda \left( f \right) = \lambda \left( g \right)$ for
any two biholomorphically equivalent singularities $f$ and $g$, it suffices
to prove that for a singularity $f_0 : \left( \mathbbm{C}^n, 0 \right)
\rightarrow \left( \mathbbm{C}^{}, 0 \right)$ the equality
\begin{equation}
\lambda \left( f_0 \left( z \right) \right) =
\lambda \left( f_0 \left( z \right) + z_{n + 1}^2 \right) \label{Numer*}
\end{equation}
holds, where $z = \left( z_1, \ldots, z_n \right)$.

First we consider the case $\mu(f_0)=1$. Clearly, $\tmop{ord} f_0 = 2$. For the
deformation $f_s(z) \assign f_0(z) + s z_1$ we have $\mu(f_0)-\mu(f_s)=1$, $s \neq 0$. Hence
$\lambda (f_0)=1$. Similarly, $\lambda (f_0(z) + z_{n+1}^2) = 1$.

Now assume that $\mu (f_0) \geqslant 2$. 

First note, that if $\left( f_s \right)$ is a deformation of $f_0$ then the
family $\left( f_s \left( z \right) + z_{n + 1}^2 \right)$ is a deformation
of $f_0 \left( z \right) + z_{n + 1}^2$. Clearly, $\mu\left( f_s \left( z
\right) + z^2_{n + 1} \right) = \mu \left( f_s \left( z \right) \right)$ so
\[ \lambda \left( f_0 \left( z \right) \right) \geqslant \lambda \left( f_0
\left( z \right) + z^2_{n + 1} \right) . \]
	
To prove the opposite inequality we take a deformation $\left( g_s \right)$
of $g_0 \left( z, z_{n + 1} \right) \assign f_0 \left( z \right) + z_{n +
1}^2$ that realizes $\lambda \left( g_0 \right)$ i.e. $\mu \left( g_0
\right) - \mu \left( g_s \right) = \lambda \left( g_0 \right)$ for $s \neq
0$. Let, by Theorem \ref{Th Ebe}, $h_1, \ldots, h_{\mu + n - 1} \in
\mathcal{O}^n$ constitute a basis of $\frac{\mathfrak{m}_n}{\mathfrak{m}_n
\left( \nabla f_0 \right) \mathcal{O}^n}$, where $\mu \assign \mu \left( f_0
\right)$ and $\mathfrak{m}_n$ is the maximal ideal of $\mathcal{O}^n$. Then
$h_1, \ldots, h_{\mu + n - 1}, z_{n + 1}$ constitute a basis of
$\frac{\mathfrak{m}_{n + 1}}{\mathfrak{m}_{n + 1}  \left( \nabla g_0 \right)
\mathcal{O}^{n + 1}}$. Hence, up to a biholomorphism, we may assume that
\[ g_s \left( z, z_{n + 1} \right) = v_1 \left( s \right) h_1 \left( z
\right) + \ldots + v_{\mu + n - 1} \left( s \right) h_{\mu + n - 1}
\left( z \right) + v_{\mu + n} \left( s \right) z_{n + 1} + f_0 \left( z
\right) + z_{n + 1}^2, \]
for holomorphic $v_1, \ldots, v_{\mu + n} : \left( \mathbbm{C}, 0 \right)
\rightarrow \left( \mathbbm{C}, 0 \right)$.

We claim that the $g_s$'es are not smooth. Indeed, in the opposite case we would have
for $s \neq 0$
\begin{equation}
\lambda (g_0) = \mu (g_0) - \mu (g_s) = \mu (g_0) \label{lab}.
\end{equation}
On the other hand, for the deformation $\widetilde{g}_s(z,z_{n+1}):=s (z_1^2+\ldots+z_n^2)+g_0 (z,z_{n+1})$
of $g_0$ we would have, for sufficiently small $s \neq 0$, $\mu(\widetilde{g}_s)=1$ and then
$\mu(g_0)-\mu(\widetilde{g}_s)=\mu(f_0)-1>0$. Hence 
$\lambda(g_0) \leqslant \mu(g_0) - \mu(\widetilde{g}_s) = \mu(g_0)-1$,
a contradiction to (\ref{lab}).

Since the $g_s$'es are not smooth, $\nu_{\mu+n}=0$. Thus for the deformation
\[ f_s \left( z \right) \assign v_1 \left( s \right) h_1 \left( z \right) +
\ldots + v_{\mu + n - 1} \left( s \right) h_{\mu + n - 1} \left( z
\right) + f_0 \left( z \right) \]
of $f_0$ there is $\mu \left( g_s \right) = \mu \left( f_s \right)$ and
\[ \lambda \left( g_0 \right) = \mu \left( g_0 \right) - \mu \left( g_s
\right) = \mu \left( f_0 \right) - \mu \left( f_s \right) = \lambda
\left( \left( f_s \right) \right) . \]
This implies $\lambda \left( f_0 \right) \leqslant \lambda \left( g_0
\right)$.
\end{proof}

\section{Main Results}

In this section we will present the proofs of the results. We begin with the main theorem,
concerning the class $X_9$.

\begin{theorem}
\label{Th1}For the singularities
\begin{equation}
f^{a}_{0} \left(  x, y \right)  = x^{4} + y^{4} + ax^{2} y^{2},
\label{Numer 2.1}%
\end{equation}
{\noindent}where $a \in\mathbbm{C}, a^{2} \neq4$, we have
\[
\lambda\left(  f^{a}_{0} \right)  = 2 \text{.}%
\]
{\noindent}Moreover, for every singularity of type $X_{9}$ its jump is equal
to $2$.
\end{theorem}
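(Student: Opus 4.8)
The plan is to prove the two inequalities $\lambda(f_0^a)\ge 2$ and $\lambda(f_0^a)\le 2$ separately, and then to obtain the claim for an arbitrary singularity of type $X_9$ at once from Proposition \ref{Pro 3}: every such germ is stably equivalent to some $f_0^a$ with $a^2\ne 4$, and the jump is a stable-equivalence invariant. The structural fact underlying everything is that, by Theorem \ref{Th Ebe}, each deformation of $f_0^a$ is biholomorphically equivalent to one induced from the versal unfolding. Computing a monomial basis of $\mathfrak m/\mathfrak m(\nabla f_0^a)$ — one verifies that $\{x,y,x^2,xy,y^2,x^3,x^2y,xy^2,y^3,x^2y^2\}$ is such a basis, all monomials of degree $\le 4$ — shows that, up to a biholomorphism (which leaves the Milnor number unchanged), one may assume $f_s$ is a \emph{polynomial of degree exactly $4$}, since the coefficient of $x^4+y^4$ remains $1$. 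Hence the problem becomes one about plane quartics and their projective closures.

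For the lower bound I would argue by contradiction. Assume a deformation with $\mu_s=8$ for $s\ne 0$; passing to the equivalent degree-$4$ representative $\tilde f_s$, the projective curve $\mathcal C_s=\{\tilde f_s=0\}\subset\mathbbm{C}\mathbbm{P}^2$ is a reduced quartic carrying an isolated singular point of Milnor number $8$ at the origin. Theorem \ref{Th GP} with $d=4$ gives $8\le (4-1)(4-2)+m-1$, so at least $m\ge 3$ global irreducible components pass through the origin. The heart of the argument is then a short case analysis on the degrees of these components (which sum to at most $4$): for $(1,1,1,1)$ we get four concurrent lines and $\mu_0=9$; for $(1,1,1)$ three concurrent lines and $\mu_0=4$; and for $(1,1,2)$, two lines and a smooth conic through the origin, I would use $\mu_0=2\delta_0-r+1$ with $r=3$ branches, bounding $\delta_0\le 4$ because a smooth conic is tangent to at most one of the two lines and meets each of them with local multiplicity $\le 2$, whence $\mu_0\le 6$. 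In every case $\mu_0\ne 8$, a contradiction; therefore $\lambda(f_0^a)\ge 2$.

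For the upper bound it is enough to exhibit a single deformation with $\mu_s=7$. A Newton-diagram computation (using Theorems \ref{Th Kouch} and \ref{Th Plo}) shows that Kouchnirenko-non-degenerate deformations realise only Newton numbers in $\{\dots,5,6,9\}$, never $7$ or $8$, so the required family must be degenerate. I would group the four lines of $f_0^a$ into two pairs and deform each pair into a smooth conic through the origin, arranging the two conics to have contact of order exactly $4$ there (an $A_7$ point); then $\mu_s=2\delta_0-r+1=2\cdot 4-2+1=7$, giving a deformation of jump $2$ and hence $\lambda(f_0^a)\le 2$.

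The step I expect to be the main obstacle is the case analysis in the lower bound, and within it the configuration $(1,1,2)$: ruling out $\mu_0=8$ there is exactly where Theorem \ref{Th GP} together with the tangency constraint on a conic is indispensable, and it must be carried out carefully (including the degenerate possibilities of the quartic factoring or dropping degree, which only improve the bound). A close second is making the degenerate $A_7$ family explicit and confirming that the contact order is $4$ rather than $2$, i.e. that $\mu_s=7$; once both are in place, Proposition \ref{Pro 3} upgrades the result from $f_0^a$ to the whole class $X_9$.
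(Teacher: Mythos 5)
Your proposal is correct, and up to the decisive step it has the same skeleton as the paper: reduction to $f_0^a$ via Proposition \ref{Pro 3}, normalization of an arbitrary deformation to a degree-$4$ polynomial via Theorem \ref{Th Ebe} together with exactly the basis of Lemma \ref{Le1}, and Theorem \ref{Th GP} to force $m\geqslant 3$ components through the origin; moreover your upper-bound family is the paper's deformation in disguise, since
$x^4+(y^2+sx)^2+ax^2(y^2+sx)=\left(y^2+sx-r_1x^2\right)\left(y^2+sx-r_2x^2\right)$ with $r_1+r_2=-a$, $r_1r_2=1$, which is precisely two smooth conics meeting only at $0$ with contact of order $4$ (an $A_7$ point). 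Where you genuinely diverge is in refuting $\mu_s=8$. The paper stays inside its Newton-polygon toolkit: $\operatorname{ord}f_s\in\{3,4\}$, the case $4$ dies because $\nu\geqslant 9$ (Theorem \ref{Th Kouch}), and in the case $3$ degeneracy forces the cubic part to be $(\alpha x+\beta y)^2(\gamma x+\delta y)$, after which a linear change of coordinates yields a non-degenerate germ with $\nu\leqslant 6$. You instead classify the degree patterns $(1,1,1)$, $(1,1,1,1)$, $(1,1,2)$ of the components through the origin and bound the local Milnor number by Milnor's formula $\mu=2\delta-r+1$ plus B\'ezout/tangency estimates, getting $4$, $9$ and $\leqslant 6$ respectively --- all different from $8$. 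Your route is more geometric, subsumes the paper's order analysis, and replaces the paper's twice-glossed ``checking all the possible cases'' of Newton diagrams by intersection theory; the price is that it relies on Milnor's formula (never invoked in the paper) and that reducedness of the projective quartic must be addressed, which your parenthetical correctly disposes of (a multiple component through $0$ contradicts isolatedness of the singularity, while one away from $0$ drops the reduced degree to $\leqslant 3$, whence Theorem \ref{Th GP} gives $\mu_0\leqslant 4$ at once). Likewise your $\mu_s=7$ is obtained by recognizing the $A_7$ point rather than by the paper's coordinate change $x\mapsto x-sy^2$, $y\mapsto sy$ followed by the Kouchnirenko theorem; both verifications are routine, so the two proofs are of comparable completeness and length.
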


First we state and prove a lemma.

\begin{lemma}
\label{Le1}The (classes of the) monomials $x^{i} y^{j}$ with $0 < i + j \leqslant3$ and the
monomial $x^{2} y^{2}$ form a basis of the $\mathbbm{C}$-vector space $\mathfrak{m}/ \left.
\mathfrak{m} \left(  \nabla f^{a}_{0} \right)  \right. $.
\end{lemma}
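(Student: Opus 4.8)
The plan is to verify that the ten listed monomials span the quotient $\mathfrak{m}/\mathfrak{m}(\nabla f_0^a)$ and then to invoke a dimension count to upgrade spanning to a basis. First I would tally the candidates: the monomials $x^iy^j$ with $0<i+j\leqslant 3$ number $2+3+4=9$ (in degrees $1,2,3$ respectively), and adjoining $x^2y^2$ gives exactly $10$ elements. On the other hand, applying Proposition \ref{Pro 2} with $f=f_0^a$, $n=2$ and $\mu(f_0^a)=9$ yields $\dim_{\mathbbm{C}}\mathcal{O}^2/\mathfrak{m}(\nabla f_0^a)=\mu+n=11$; since $\mathcal{O}^2/\mathfrak{m}\cong\mathbbm{C}$, this gives $\dim_{\mathbbm{C}}\mathfrak{m}/\mathfrak{m}(\nabla f_0^a)=\mu+n-1=10$. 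Hence it will suffice to show that the ten monomials generate the quotient, because a spanning set whose cardinality equals the dimension is automatically a basis.

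Next I would make the ideal explicit. Since $\nabla f_0^a=(4x^3+2axy^2,\,4y^3+2ax^2y)$ and $\mathfrak{m}=(x,y)$, the ideal $\mathfrak{m}(\nabla f_0^a)$ is generated by the four weighted homogeneous polynomials of degree $4$
\[
4x^4+2ax^2y^2,\quad 4x^3y+2axy^3,\quad 2ax^3y+4xy^3,\quad 2ax^2y^2+4y^4.
\]
Working modulo this ideal, the two middle generators give $x^3y\equiv-\tfrac{a}{2}xy^3$ and $xy^3\equiv-\tfrac{a}{2}x^3y$; substituting one into the other produces $\bigl(1-\tfrac{a^2}{4}\bigr)x^3y\equiv 0$, and here the hypothesis $a^2\neq 4$ is exactly what forces $x^3y\equiv 0$, whence also $xy^3\equiv 0$. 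The two outer generators then read $x^4\equiv-\tfrac{a}{2}x^2y^2$ and $y^4\equiv-\tfrac{a}{2}x^2y^2$.

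With these four congruences in hand, the remaining step is a downward reduction of all monomials of degree $\geqslant 4$. Every degree-$4$ monomial reduces into $\mathbbm{C}\cdot x^2y^2$ (the two cross terms vanish, the two pure fourth powers become multiples of $x^2y^2$). Multiplying $x^3y\equiv xy^3\equiv 0$ by $x$ or $y$ kills $x^3y^2,\,x^2y^3$ and their successors, and one then checks directly that each degree-$5$ monomial is $\equiv 0$; since $\mathfrak{m}$ generates everything above degree $5$, all monomials of degree $\geqslant 5$ vanish in the quotient as well. Thus every element of $\mathfrak{m}$ is congruent to a $\mathbbm{C}$-linear combination of the monomials of degree $\leqslant 3$ together with $x^2y^2$, so the ten monomials span, and by the dimension count they form a basis.

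I expect the only genuinely delicate point to be the reduction $x^3y\equiv xy^3\equiv 0$: it is precisely where the standing assumption $a^2\neq 4$ (equivalently, the isolatedness of $f_0^a$) enters, and without it the quotient would be of strictly larger dimension. The rest is routine but slightly tedious monomial bookkeeping. As an alternative to the dimension count, linear independence could be checked by hand, since $\mathfrak{m}(\nabla f_0^a)$ contains no element of degree $<4$ and $x^2y^2$ lies outside the degree-$4$ span of the four generators; but combining the spanning argument with Proposition \ref{Pro 2} is the shorter route and is the one I would record.
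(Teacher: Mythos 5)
Your proof is correct and follows essentially the same route as the paper's: show that the ten monomials span $\mathfrak{m}/\mathfrak{m}\left(\nabla f_{0}^{a}\right)$ by reducing all monomials of degree $\geqslant 4$ (with $x^{3}y\equiv xy^{3}\equiv 0$ and $x^{4}\equiv y^{4}\equiv -\tfrac{a}{2}x^{2}y^{2}$), then conclude via the dimension count $\mu+n-1=10$ from Proposition \ref{Pro 2}. The only difference is presentational: where you eliminate $x^{3}y$ and $xy^{3}$ from the congruences coming from $y\,\partial f_{0}^{a}/\partial x$ and $x\,\partial f_{0}^{a}/\partial y$ (which is exactly where $a^{2}\neq 4$ enters), the paper records the explicit membership identities for $x^{5}$ and $x^{3}y$ obtained by solving that same $2\times 2$ linear system.
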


\begin{proof}
We have $\nabla f^a_0 \left( x, y \right) = \left( 4 x^3 + 2 axy^2,
4 y^3 + 2 ax^2 y \right)$. Let us note that $x^5, x^3 y \in \mathfrak{m} \left( \nabla
f^a_0 \right)$ because
\[ x^5 = \left( \frac{x^2}{4} + \frac{2 ay^2}{4 \left( a^2 - 4 \right)}
\right)  \frac{\partial f^a_0}{\partial x} + \left( \frac{- a^2 xy}{4
\left( a^2 - 4 \right)} \right)  \frac{\partial f^a_0}{\partial y} \]
and
\[ x^3 y = \left( \frac{- y}{\left( a^2 - 4 \right)} \right)  \frac{\partial
f^a_0}{\partial x} + \left( \frac{ax}{2 \left( a^2 - 4 \right)} \right)
\frac{\partial f^a_0}{\partial y} \text{.} \]
Since $f^a_0$ is symmetric with respect to $x$ and $y$, also $y^5, xy^3 \in
\left. \mathfrak{m} \left( \nabla f^a_0 \right) \right.$.
Hence the classes of the monomials
\[x,y,x^2,x y,y^2,x^3,x^2 y, x y^2 ,y^3,x^4,x^2 y^2,y^4 \]
generate $\mathfrak{m}/ \left.\mathfrak{m} \left(  \nabla f^{a}_{0} \right)  \right.$.
Since $x^4 \equiv - {\nobreak} \tfrac{a}{2} x^2 y^2$, $y^4 \equiv - {\nobreak} \tfrac{a}{2} x^2 y^2$
modulo $\left. \mathfrak{m} \left( \nabla f^a_0 \right) \right.$, we get that the classes of the monomials
$x^{i} y^{j}$ with $0 < i + j \leqslant3$ and the monomial $x^{2} y^{2}$
also generate the space $\mathfrak{m}/ \left.\mathfrak{m} \left(  \nabla f^{a}_{0} \right)  \right.$.
They form a basis of $\mathfrak{m}/ \left.\mathfrak{m} \left(  \nabla f^{a}_{0} \right)  \right.$
because by Proposition \ref{Pro 2} $\dim_{\mathbbm{C}} \mathfrak{m}^{} / \left.
\mathfrak{m} \left( \nabla f^a_0 \right) \right.=\dim_{\mathbbm{C}} \mathcal{O}^n / \left.
\mathfrak{m} \left( \nabla f^a_0 \right) \right. -1 =
\dim_{\mathbbm{C}} \mathcal{O}^n / \left.\left( \nabla f^a_0 \right) \mathcal{O}^n \right. +1 =
\mu(f^a_0)+1=10$.
\end{proof}

\begin{proof}[Proof of Theorem \ref{Th1}]
By Proposition \ref{Pro 3} it is enough to prove the first part of the theorem.
Let us fix $a \in \mathbbm{C}, a^2 \neq 4$ and let $f_0 \assign f^a_0$. We
have $\mu \left( f_0 \right) = 9$. Let us consider the
deformation
\[ f_s \left( x, y \right) \assign x^4 + (y^2 + sx)^2 + ax^2 (y^2 + sx)
 \]
of $f_0$. Let us apply the change of
coordinates: $x \mapsto x - sy^2$, $y \mapsto sy$, for $s \neq 0$. In
these coordinates the $f_s$'es take the form
\[ \bar{f}_s \left( x, y \right) = s^2 x^2 + as^3 xy^4 + s^4 y^8 + \left[
asx^3 + x^4 - 2 as^2 x^2 y^2 - 4 sx^3 y^2 + 6 s^2 x^2 y^4 - 4 s^3 xy^6
\right] . \]
It is easily seen that such $\bar{f}_s$'es are non-degenerate if $s \neq 0$.
Thus, by Kouchnirenko theorem, we get $\mu \left(
\bar{f}_s \right) = \nu \left( \bar{f}_s \right) = 7$ and so
\begin{equation}
\mu \left( f_s \right) = 7 \text{ for $s \neq 0$.} \label{Numer 0.25}
\end{equation}
This means that $\lambda \left( \left( f_s \right) \right) = 2$ and therefore
$\lambda \left( f_0 \right) \leqslant 2$. By the definition of the jump of a
singularity, there are only two cases: $\lambda \left( f_0 \right) = 1$ or
$\lambda \left( f_0 \right) = 2$. We will exclude the first possibility.
Suppose to the contrary, that there exists a deformation $\left( f_s
\right)$ of the singularity $f_0$ with the property that
\begin{equation}
\mu \left( f_s \right) = 8 \text{ for } s \neq 0 \text{.} \label{Numer
0.5}
\end{equation}
By Theorem \ref{Th Ebe} and Lemma \ref{Le1} we may assume that
\begin{multline*}
f_s \left( x, y \right)  =  s_{1 \nocomma 0}\left( s \right) x + s_{0
\nocomma 1} \left( s \right) y + s_{2 \nocomma 0} \left( s \right) x^2 +
s_{1 \nocomma 1} \left( s \right) xy + s_{0 \nocomma 2} \left( s \right)
y^2 + s_{30} \left( s \right) x^3 + s_{2 \nocomma 1} \left( s \right) x^2
y \\
 +  s_{1 \nocomma 2} \left( s \right) xy^2 + s_{03} \left( s \right)
y^3 + s_{2 \nocomma 2} \left( s \right) x^2 y^2 + f_0 \left( x, y \right),
\end{multline*}
where $s_{1 \nocomma 0}, \ldots, s_{2 \nocomma 2} : \left( \mathbbm{C}, 0
\right) \rightarrow \left( \mathbbm{C}, 0 \right)$ are holomorphic.
Since $\deg f_s = 4$ and $\mu \left( f_s \right) = 8$ for $s \neq 0$, 
by Theorem \ref{Th GP} three or four of the irreducible components of the curve
$\mathcal{C}_s \assign \left\{ \left( x, y \right) \in \mathbbm{C}^2 : f_s
\left( x, y \right) = 0 \right\}$ pass through the origin. Hence $\tmop{ord}
f_s = 3$ or $\tmop{ord} f_s = 4$, for $0<\left|  s \right|  \ll1$. The latter case is
impossible by Theorem \ref{Th Kouch} because then $\mu \left( f_s \right)
\geqslant \nu \left( f_s \right) \geqslant 9$. Thus, it suffices to consider
the case $\tmop{ord} f_s = 3$.
So, assume $\tmop{ord} f_s = 3$ for $s \neq 0$. Fix any small $s_0 \in
\mathbbm{C} \setminus \left\{ 0 \right\}$. We can write
\[ f_{s_0} \left( x, y \right) = s_{30} x^3 + s_{2 \nocomma 1} x^2 y + s_{1
\nocomma 2} xy^2 + s_{03} y^3 + \left( s_{2 \nocomma 2} + a \right) x^2
y^2 + x^4 + y^4 \text{,} \]
with $s_{i \nocomma j} = s_{i \nocomma j} \left( s_0 \right) \in \mathbbm{C}$.
Since $\tmop{ord} f_{s_0} = 3$, $f_{s_0}$ has to be degenerate. Otherwise, by checking all
the possible cases, we would get $\mu \left( f_{s_0} \right)\leqslant 6$ (by the 
Kouchnirenko theorem), which contradicts (\ref{Numer 0.5}). Since $\tmop{gcd} \left( 3,4 \right)=1$, the degeneracy of $f_{s_0}$
may only happen on a segment of $\Gamma\left(  f_{s_0}\right)$ lying in the line: $u+v=3$.
So, we may write
\[ f_{s_0} \left( x, y \right) = \left( \alpha x + \beta y \right)^2  \left(
\gamma x + \delta y \right) + \left( s_{2 \nocomma 2} + a \right) x^2 y^2
+ x^4 + y^4 \text{,} \]
for some $\alpha, \beta, \gamma, \delta \in \mathbbm{C}$, $\left| \alpha \right|+\left| \beta \right|>0$,
$\left| \gamma \right|+\left| \delta \right|>0$. Moreover, $\alpha \neq 0$ and $\beta\neq 0$
because otherwise $ f_{s_0}$ would be non-degenerate. If we change coordinates:
$\alpha x + \beta y \mapsto x$, $y \mapsto y$ then $f_{s_0}$ takes the form
\[ \widetilde{f}_{s_0} \left( x, y \right) = x^2 \left(
\varepsilon x + \zeta y \right) + P_4 \left (x, y \right) \text{,} \]
where $\varepsilon, \zeta \in \mathbbm{C}$, $\left| \varepsilon \right|+\left| \zeta \right|>0$,
and $P_4$ is a non-zero homogeneous polynomial of degree $4$. We easily check, considering all the 
possible cases, that $\widetilde{f}_{s_0}$ is non-degenerate. So, again by the Kouchnirenko theorem, we
would have
\[ \mu \left( f_{s_0} \right) = \mu ( \widetilde{f}_{s_0} ) = \nu ( \widetilde{f}_{s_0} ) \leqslant 6\text{,}\]
which contradicts (\ref{Numer 0.5}).
\end{proof}

Now we prove a partial result concerning the class $W_{1,0}$.

\begin{proposition}
\label{Th0}For the singularities $f_{0}^{\left(0,b\right)} \left(  x, y \right)  = x^{4} +
y^{6} + bx^{2} y^{4}$, where $b \in\mathbbm{C}$, we have
\[
\lambda(  f_{0}^{\left(0,b\right)} )  = 1 \text{.}%
\]
{\noindent}In particular, $\lambda\left(  x^{4} + y^{6} \right)  = 1$.
\end{proposition}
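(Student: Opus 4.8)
The plan is to exploit that the jump is always a positive integer, so that it suffices to produce a single deformation of the two‑variable germ $f_0^{(0,b)}$ whose Milnor number drops by exactly $1$; by Proposition~\ref{Pro 3} this settles the stably equivalent case as well. First I would record that $\mu(f_0^{(0,b)})=15$: the germ is convenient, its Newton polygon is the single segment from $(4,0)$ to $(0,6)$ (the point $(2,4)$ coming from $bx^2y^4$ lies strictly above the line $3u+2v=12$), and on that segment the weighted homogeneous part $x^4+y^6$ is Kouchnirenko non‑degenerate; hence $\mu=\nu=15$ by Theorems~\ref{Th Kouch} and~\ref{Th Plo}. The whole content is then to exhibit a deformation $(f_s)$ with $\mu(f_s)=14$ for $s\neq 0$, after which $\lambda(f_0^{(0,b)})\le 1$, and therefore $\lambda(f_0^{(0,b)})=1$.

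To find such a family I would follow the method of Theorem~\ref{Th1}: guess an explicit one‑parameter family $f_s$ and an $s$‑dependent change of coordinates $\Phi_s$ so that $\bar f_s:=f_s\circ\Phi_s$ becomes \emph{convenient and non‑degenerate} with Newton number $\nu(\bar f_s)=14$; then $\mu(f_s)=\mu(\bar f_s)=14$. The search is guided by the curve‑theoretic constraint $\mu=2\delta-r+1$. For $b=0$ one has the factorisation $f_0=(x^2-iy^3)(x^2+iy^3)$ into two $(2,3)$‑cusps meeting with intersection number $6$, so $\delta_0=8$, $r_0=2$, $\mu_0=15$ (the same local picture holds for general $b$). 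Since a deformation can only decrease $\delta$, a fibre with $\mu_s=14$ forces $2\delta_s-r_s=13$ with $\delta_s\le 8$, leaving exactly the two possibilities $(\delta_s,r_s)=(7,1)$ and $(\delta_s,r_s)=(8,3)$: either an irreducible $\delta=7$ singularity (an $E_{14}$‑type germ $\cong x^3+y^8$, matching the adjacency $W_{1,0}\to E_{14}$), or a three‑branch configuration keeping $\delta=8$, e.g. two branches of mutual contact $4$ together with a transverse one.

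The main obstacle is precisely this construction, because the value $\mu=14$ is \emph{skipped} by every obvious family. On the one hand, any single‑monomial deformation $f_0+s\,x^iy^j$ either keeps $\mu=15$ (the weight‑$12$ moduli direction $x^2y^3$) or drops it straight to $13$ (the weight‑$11$ directions $xy^4$ and $x^3y$), never giving $14$. On the other hand, no deformation that is non‑degenerate \emph{in the original coordinates} can reach it either: with the surviving corners $x^4,y^6$ one has $\nu=15-2\,(\text{cut area})$, so the Newton number runs through odd values $15,13,11,\dots$ and never equals $14$ (and lowering a corner only makes $\nu$ smaller). Hence the required family must be Kouchnirenko‑\emph{degenerate} as written, its Milnor number $14$ becoming visible only after $\Phi_s$ — exactly the phenomenon in Theorem~\ref{Th1}, where the non‑degenerate model surfaced only after the substitution and rescaling. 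Pinning down $\Phi_s$ and the family is delicate; tuning the coefficients so that the relevant cubic acquires a double root (producing the $(8,3)$ fibre) leads to non‑holomorphic coefficients unless one reparametrises, e.g. by setting $s=\tau^3$, so some such adjustment is to be expected.

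Once a candidate $(f_s)$ and $\Phi_s$ are in hand, the remaining step is routine but must be done carefully: verify that $\bar f_s$ is convenient, compute $\nu(\bar f_s)=14$ from its Newton polygon, and check non‑degeneracy on each edge by confirming that $\partial\bar f_{s,\gamma}/\partial x=\partial\bar f_{s,\gamma}/\partial y=0$ has no solution in $(\mathbb{C}^{\ast})^2$. With $\mu(f_s)=\nu(\bar f_s)=14$ established for $s\neq 0$, the deformation has jump $1$, so $\lambda(f_0^{(0,b)})=1$, and the special case $b=0$ gives $\lambda(x^4+y^6)=1$.
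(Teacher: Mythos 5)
Your proposal is not a proof: it is a correct strategy outline with the single essential ingredient missing. You rightly reduce everything to exhibiting one deformation $(f_s)$ of $f_0^{(0,b)}$ with $\mu(f_s)=14$ for $s\neq 0$ (the computation $\mu(f_0^{(0,b)})=\nu(f_0^{(0,b)})=15$ and the concluding logic are fine, and match the paper), and you even diagnose correctly that such a family must be Kouchnirenko-degenerate in the original coordinates and should realize an $E_{14}$-type germ $x^3+y^8$. But you then stop at ``once a candidate $(f_s)$ and $\Phi_s$ are in hand, the remaining step is routine'', i.e.\ you explicitly concede that the construction — which is the whole content of the proposition — has not been carried out. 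As it stands, nothing in the proposal certifies that a fibre with $\mu=14$ exists at all; your own observations (monomial deformations jump from $15$ to $13$, coordinate-wise non-degenerate deformations have odd Newton number) if anything emphasize that existence is exactly the nontrivial point.

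For comparison, the paper's construction is short and requires no reparametrization of the kind you anticipate (no $s=\tau^3$): take
$f_s(x,y):=x^4+\left(y^2+sx\right)^3+bx^2y^4$,
i.e.\ substitute $y^2\mapsto y^2+sx$ into $y^6=(y^2)^3$. These germs are degenerate for $s\neq0$, but after the change of coordinates $x\mapsto x-sy^2$, $y\mapsto sy$ one gets
$\bar f_s(x,y)=s^3x^3+(s^4+bs^6)y^8+\left[x^4-4sx^3y^2+(6s^2+bs^4)x^2y^4-(4s^3+2bs^5)xy^6\right]$,
whose Newton polygon is the single segment from $(3,0)$ to $(0,8)$ (all bracketed terms lie strictly above it), and which is non-degenerate for $s\neq0$; hence $\mu(f_s)=\mu(\bar f_s)=\nu(\bar f_s)=2\cdot12-3-8+1=14$ by the Kouchnirenko theorem — precisely the $(\delta,r)=(7,1)$, $E_{14}$ scenario you predicted. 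So your structural analysis pointed at the right target, but without producing this (or some other) explicit family and verifying its Milnor number, the proof has a genuine gap at its central step.
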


\begin{proof}
Fix $b \in \mathbbm{C}$. Since $f_{0}^{\left(0,b\right)}$ is Kouchnirenko
non-degenerate, it follows that $\mu ( f_{0}^{\left(0,b\right)} ) = \nu ( f_{0}^{\left(0,b\right)} ) = 15$. Consider the
following deformation of $f_{0}^{\left(0,b\right)}$:
\[ f^{\left(0,b\right)}_s \left( x, y \right) \assign x^4 + \left( y^2 + sx \right)^3 + bx^2
y^4 \text{.} \]
The deformation consists of degenerate singularities (for $s \neq 0$). Apply
the following change of coordinates: $x \mapsto x - sy^2 \comma y \mapsto
sy$. In these coordinates the $f^{\left(0,b\right)}_s$ take the form
\[ \bar{f}^{\left(0,b\right)}_s \left( x, y \right) = s^3 x^3 + (s^4 + bs^6) y^8 + \left[ x^4 -
4 sx^3 y^2 + (6 s^2 + bs^4) x^2 y^4 - (4 s^3 + 2 bs^5) xy^6 \right]
\text{.} \]
It is immediately seen that for $s \neq 0$ the singularities $\bar{f}^{\left(0,b\right)}_s$ are
non-degenerate and so
\[ \mu ( \bar{f}^{\left(0,b\right)}_s ) = 14 \text{.} \]
Since the Milnor number is a biholomorphic (and even a topological)
invariant of a singularity, there is also
\[ \mu ( f^{\left(0,b\right)}_s ) = 14 \text{.} \]
It means that for this particular deformation $( f^{\left(0,b\right)}_s )$ of $f_{0}^{\left(0,b\right)}$
we have $\lambda (( f^{\left(0,b\right)}_s )) = 1$ and consequently
$\lambda( f_{0}^{\left(0,b\right)}) = 1$.
\end{proof}

\begin{corollary}
For every singularity $f_0$ stably equivalent to one of $f^{\left(0,b\right)}_0$, $b\in\mathbbm{C}$,
the jump $\lambda(f_0)$ of $f_0$ is equal to $1$.
\end{corollary}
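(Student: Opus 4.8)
The plan is to deduce the statement directly by combining the two propositions already established, so that no new computation is required. The corollary is essentially a formal consequence of Proposition \ref{Th0} together with the stable-equivalence invariance recorded in Proposition \ref{Pro 3}.

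First I would invoke Proposition \ref{Th0}, which gives $\lambda(f_0^{(0,b)}) = 1$ for every $b \in \mathbbm{C}$; the explicit deformation $x^4 + (y^2 + sx)^3 + bx^2 y^4$ exhibited there realizes a jump of $1$, and since every jump is a nonzero positive integer this already forces $\lambda(f_0^{(0,b)}) = 1$. Next, by hypothesis any $f_0$ under consideration is stably equivalent to some $f_0^{(0,b)}$. Proposition \ref{Pro 3} asserts precisely that $\lambda$ is an invariant of stable equivalence, so $\lambda(f_0) = \lambda(f_0^{(0,b)})$. Chaining the two equalities yields $\lambda(f_0) = 1$, as claimed.

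There is no genuine obstacle in the corollary itself: all the analytic work has already been carried out --- the construction of a non-degenerate deformation lowering the Milnor number by exactly one (Proposition \ref{Th0}), and the reduction of the jump across the addition of a square variable via the versal-unfolding description of deformations (Proposition \ref{Pro 3}). The only thing left to verify is that the hypotheses of these two propositions are met, which is immediate from the definition of stable equivalence.
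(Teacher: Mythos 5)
Your proposal is correct and is exactly the argument the paper intends: the corollary is stated without an explicit proof precisely because it follows immediately by combining Proposition \ref{Th0} (which gives $\lambda(f_0^{(0,b)})=1$) with Proposition \ref{Pro 3} (invariance of the jump under stable equivalence). Nothing is missing, and no different route is taken.
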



Proposition {\ref{Th0}} implies also that $\lambda(f_0)$ is not a topological invariant of $f_0$.
Recall that two singularities $f$ and $g$ in $\mathbbm{C}^n$ have the same topological type if there
exist neighbourhoods $U$ and $V$ of $0\in\mathbbm{C}^n$ and a homeomorphism $\Phi : U \rightarrow V$
such that $\Phi(V(f))=V(g)$, where $V(f)$ (resp. $V(g)$) is the zero set of $f$ (resp. $g$) in $U$
(resp. $V$).

\begin{corollary}\label{cor2}
The jump of the Milnor number $\lambda(f_0)$ is not a topological invariant of $f_0$.
\end{corollary}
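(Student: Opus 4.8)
The plan is to produce two plane-curve singularities that share a topological type but have different jumps; the mere existence of such a pair contradicts topological invariance of $\lambda$ at once. Both witnesses will be taken inside the class $W_{1,0}$, represented by $f_0^{(a,b)}(x,y)=x^{4}+y^{6}+(a+by)x^{2}y^{3}$. For the first I would take $f_0^{(0,0)}=x^{4}+y^{6}$, for which Proposition~\ref{Th0} gives $\lambda(x^{4}+y^{6})=1$; for the second, a generic member $f_0^{(a,b)}$ (whose parameters in particular satisfy $a^{2}\neq 4$), for which the Gusein-Zade theorem recalled in the Introduction guarantees $\lambda(f_0^{(a,b)})>1$. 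Since the two values differ, everything reduces to checking that these two singularities have the same topological type.

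The first step would be to verify that every $f_0^{(a,b)}$ with $a^{2}\neq 4$ is convenient and Kouchnirenko non-degenerate. Its Newton polygon is the single segment joining $(4,0)$ and $(0,6)$ --- the monomial $x^{2}y^{3}$ lies on this segment and $x^{2}y^{4}$ strictly above it --- so the only quasi-homogeneous form to test is $x^{4}+ax^{2}y^{3}+y^{6}$, and a short computation shows that its two partial derivatives have no common zero in $\mathbbm{C}^{\ast}\times\mathbbm{C}^{\ast}$ precisely when $a^{2}\neq 4$. Hence, by Theorem~\ref{Th Kouch}, each such $f_0^{(a,b)}$ has Milnor number equal to its Newton number $2\cdot 12-4-6+1=15$. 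Since the parameter set $\{(a,b)\in\mathbbm{C}^{2}:a^{2}\neq 4\}$ is connected, $\{f_0^{(a,b)}\}$ is a $\mu$-constant family joining $x^{4}+y^{6}$ to the chosen generic member.

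The decisive step is the passage from constancy of $\mu$ to constancy of the topological type. For a family of isolated plane-curve singularities over a connected base, $\mu$-constancy forces the embedded topological type to be constant; this is the L\^e--Ramanujam theorem (its exceptional middle dimension does not arise here, the ambient space being $\mathbbm{C}^{2}$), equivalently Teissier's equisingularity criterion for plane curves. Applying it along a path in $\{a^{2}\neq 4\}$ from $(0,0)$ to the generic parameter shows that $x^{4}+y^{6}$ and $f_0^{(a,b)}$ are homeomorphic as germs of plane curves, whereas $\lambda(x^{4}+y^{6})=1\neq\lambda(f_0^{(a,b)})$; this is exactly what the corollary asserts. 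I expect the genuine content to lie in this equisingularity input: one must be sure that the $\mu$-constant stratum is connected and that the full topological conclusion of L\^e--Ramanujam (rather than a weaker equimultiplicity statement) is invoked. By contrast, the non-degeneracy computation showing $\mu=15$ throughout the family, though needed to legitimize working with a single connected $\mu$-constant family instead of arguing pairwise, is entirely routine.
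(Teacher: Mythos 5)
Your proposal is correct and follows essentially the same route as the paper: both contrast a generic member of $W_{1,0}$ (where Gusein-Zade gives $\lambda>1$) with the $a=0$ member (where Proposition~\ref{Th0} gives $\lambda=1$), and both conclude via the L\^e--Ramanujam theorem that all $f_0^{(a,b)}$ with $a^2\neq 4$ share the same topological type. The only differences are that you explicitly verify the $\mu$-constancy (via Kouchnirenko non-degeneracy) and the connectedness of the parameter set, details the paper leaves implicit, while the paper additionally notes an alternative, easier justification via the constancy of the resolution graph.
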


\begin{proof}
By the Gusein-Zade theorem, for generic elements $f_{0}^{\left ( a,b \right )}$ of the class $W_{1,0}$
we have $\lambda(f_{0}^{\left ( a,b \right )})>1$. Proposition {\ref{Th0}} gives that the elements $f_{0}^{\left ( 0,b \right )}$
of $W_{1,0}$ satisfy $\lambda(f_{0}^{\left ( 0,b \right )})=1$. But all the singularities $f_{0}^{\left ( a,b \right )}$,
$a,b\in\mathbbm{C}$, $a^2\neq4$, have the same topological type. This follows from the general L\^e-Ramanujam theorem on $\mu$-constant
families of singularities or from the (much easier) fact that all the singularities $f_{0}^{\left ( a,b \right )}$ have the same
resolution graph.
\end{proof}
{\medskip}
\begin{acknowledgement}
We thank prof. A. P\l oski for discussions which led to improvement of the text of the paper.
\end{acknowledgement}

\end{document}